\newtheoremstyle{TheoStyle}
    {1em}                      % Space above
    {1em}                      % Space below
    {\itshape}                      % Body font
    {}                      % Indent amount
    {\bfseries}     % Theorem head font
    {.\\}                      % Punctuation after theorem head
    {1em}              % Space after theorem head
    {}                      % Theorem head spec
\newtheoremstyle{TheoStyle2}
    {1em}                      % Space above
    {1em}                      % Space below
    {}                      % Body font
    {}                      % Indent amount
    {\bfseries}     % Theorem head font
    {.\\}                      % Punctuation after theorem head
    {1em}                     % Space after theorem head
    {}                      % Theorem head spec
\newtheoremstyle{TheoStyle2i}
    {}                      % Space above
    {}                      % Space below
    {}                      % Body font
    {}                      % Indent amount
    {\bfseries}     % Theorem head font
    {.\\}                      % Punctuation after theorem head
    { }                     % Space after theorem head
    {}                      % Theorem head spec
\theoremstyle{TheoStyle}
\newtheorem{theorem}{Theorem}[section]
\newtheorem{lemma}[theorem]{Lemma}
\theoremstyle{TheoStyle2}\newtheorem{definition}[theorem]{Definition}
\newcommand{\coloneqq}{:=}
\newcommand{\R}{\mathbb R}
\newcommand{\edot}{\,\cdot\,}
\newcommand{\Mo}{\mathcal{M}}
\newcommand{\Bo}{\mathcal{B}}
\newcommand{\Ho}{\mathcal{H}}
\newcommand{\D}{\mathcal{D}}
\newcommand{\rmd}{\mathrm d}
\newcommand\abs[1]{\left\vert#1\right\vert}
\newcommand\sabs[1]{\lvert#1\rvert}
\newcommand\set[1]{\left\{#1\right\}}
\newcommand\sset[1]{\{#1\}}
\newcommand{\om}{\omega}
\DeclareMathOperator{\diag}{diag}
\newcommand{\kl}[1]{\left(#1\right)}
\newcommand{\ekl}[1]{\left[#1\right]}
\newcommand{\skl}[1]{(#1)}
\newcommand\inner[2]{\left\langle#1,#2\right\rangle}
\newcommand\sinner[2]{\langle#1,#2\rangle}
\newcommand{\req}[1]{(\ref{eq:#1})}
\numberwithin{equation}{section}
\numberwithin{figure}{section}
\numberwithin{theorem}{section}
\title{Exact Reconstruction Formula for the Spherical Mean Radon Transform on Ellipsoids}
\author{Markus Haltmeier}
\date{Department of Mathematics\\
University of Innsbruck\\
Technikestra{\ss}e 21a, A-6020 Innsbruck\\
E-mail:
\href{mailto:markus.haltmeier@uibk.ac.at}
{\tt markus.haltmeier@uibk.ac.at}}
\begin{document}

\maketitle

\begin{abstract}
Many modern imaging and remote sensing applications require reconstructing a function from spherical averages (mean values). Examples include   photoacoustic tomography, ultrasound imaging or SONAR. Several  formulas of the back-projection type for recovering a function in $n$ spatial dimensions from mean values over spheres centered on a sphere have been derived in [D. Finch, S. K. Patch, and Rakesh, SIAM J. Math. Anal. 35(5), pp. 1213--1240, 2004] for odd spatial dimension and in [D. Finch, M. Haltmeier, and Rakesh, SIAM J. Appl. Math. 68(2), pp. 392--412, 2007] for even spatial dimension. In this paper we generalize some of these formulas to the case where the centers of integration lie on the boundary of an arbitrary ellipsoid. For the special cases $n=2$ and $n=3$ our results have recently been established in [Y. Salman, J. Math. Anal. Appl., 2014, in press]. For the higher dimensional case $n > 3$ we  establish  proof techniques extending the ones in  the above references.

Back-projection type inversion formulas for recovering a function from  spherical means with centers on an ellipsoid  have  first been derived in  [F. Natterer, Inverse Probl. Imaging 6(2), pp. 315--320, 2012]  for $n=3$ and in [V. Palamodov,  Inverse Probl. 28(6), 065014, 2012] for arbitrary dimension. The results of Natterer have later been generalized to arbitrary  dimension in  [M. Haltmeier,  SIAM J. Math. Anal. 46(1), pp. 214--232, 2014].   Note that these formulas are different from the ones derived  in the present paper.

\bigskip\noindent{\bf Keywords.}
Spherical means, Radon transform, Inversion, Reconstruction formula, back-projection, Ellipsoids.

\bigskip\noindent{\bf AMS classification numbers.}
44A12, 45Q05, 35L05, 92C55.
\end{abstract}

\section{Introduction}
\label{sec:intro}

The spherical mean Radon transform $\Mo \colon C^\infty \kl{\R^n}\to C^\infty \kl{\R^n \times \kl{0, \infty}}$ maps a  smooth function $f\colon \R^n \to \R$ to the spherical mean values
\begin{equation} \label{eq:means}
\kl{\Mo f} \kl{z,r} = \frac{1}{\omega_{n-1}} \int_{S^{n-1}} f\kl{z + r\om} \rmd \omega\,,
\qquad
\text{ for } \kl{z,r} \in \R^n \times \kl{0,\infty}\,.
\end{equation}
Here $S^{n-1} = \sset{\omega \in \R^n \colon \sabs{\om}=1}$ is the $n-1$ dimensional unit sphere,  $\omega_{n-1} \coloneqq \sabs{S^{n-1}}$ its total surface measure, and $\rmd \omega$ the standard surface measure on $S^{n-1}$. The value $\kl{\Mo f} \kl{z,r}$ is the average of $f$ over a sphere with center $z\in \R^n$ and radius  $r >0$.

In this paper we  study the problem of recovering a function $f$ supported in an ellipsoid $\bar E \subset \R^n$ from the  spherical mean values $\kl{\Mo f} \kl{z,r}$ with centers restricted to the boundary of the ellipsoid. Recovering a function from spherical mean values with restricted centers is essential for many imaging and remote sensing applications, such as photoacoustic and thermoacoustic tomography (see \cite{BurBauGruHalPal07,FinRak09,KucKun08,XuWan06}), SONAR (see \cite{BelFel09,QuiRieSch11}), or ultrasound tomography (see \cite{Nor80,NorLin81}).

In \cite{FinHalRak07,FinPatRak04} several explicit reconstruction formulas for the spherical mean Radon transform have been derived for recovering a function supported in an $n$-dimensional ball from its spherical mean values centered on the boundary sphere. The proofs of  \cite{FinHalRak07,FinPatRak04} are based on integral identities established directly for $n=2$ and $n=3$. The higher dimensional cases have been reduced to the two and three dimensional cases  by expanding the function to be recovered  in a series of spherical harmonics.
In \cite{Sal14} one of the formulas of \cite{FinHalRak07,FinPatRak04} as well as the  methods of proofs  have been extended to elliptical domains in $\R^2$ and $\R^3$. In the present  paper we generalize these formulas to elliptical domains in arbitrary dimension. Opposed to \cite{FinHalRak07,FinPatRak04}  the  proofs for the higher dimensional cases are not based on a spherical harmonic expansion, which may be difficult to generalize to elliptical domains. Instead we present direct proofs that generalize  the proofs of \cite{FinHalRak07,FinPatRak04} from spherical center sets in two and three dimensions to elliptical center sets in arbitrary dimensions.

\subsection{Notation}
\label{sec:notation}

Throughout this paper, let $a_1, \dots, a_n  > 0 $ be given numbers,  let $A \coloneqq \diag (a_1, \dots, a_n)$
denote the diagonal matrix with diagonal entries
$a_i$, and let
\begin{equation}\label{eq:ell}
E
\coloneqq
\set{x \in \R^n: \abs{A^{-1}x} < 1}
= \set{x \in \R^n : \sum_{i=1}^n \frac{x_i^2}{a_i^2} < 1}
\end{equation}
denote the corresponding solid ellipsoid with semi-principal axes $a_1, \dots, a_n$.
In \req{ell} and below $\abs{\,\cdot\,}$ is the Euclidian norm on $\R^n$; the corresponding inner product will be denoted by $\inner{x}{y} = \sum_{i=1}^n  x_i  y_i$. Any point on the boundary of $E$ will be written in the form $A\sigma \in \partial E$ with
$\sigma \in S^{n-1}$.

We denote by $C^\infty \kl{\R^n}$  the set of all smooth (that is infinitely times differentiable) functions $f \colon \R^n \to \R$ and by
 $C^\infty_0 \kl{\bar E} \subset C^\infty \kl{\R^n}$ the subset of all  smooth functions  with support contained in  the
 closure $\bar E$.  The  spherical means $(\Mo f)\kl{z, r}$ of a function $f \in C^\infty \kl{\R^n}$ are defined by \req{means}. We are in particular interested in the case where $f \in C^\infty_0\kl{\bar E}$ and where
the centers of integration are restricted to $\partial E$.
Further we denote by
\begin{equation*}
	\Delta_{Ax} \coloneqq \sum_{i=1}^{n} \frac{1}{a_i^2}
	\frac{\partial^2}{\partial x_i^2}
\end{equation*}
the Laplacian with respect to the variable $Ax$ and by
$\D_r \coloneqq \frac{1}{2r} \frac{\partial}{\partial r}$
the differentiation operator with respect to $r^2$.
Occasionally we make use of the  Hilbert transform
$\Ho_s $  defined as the convolution with the distribution $\mathrm{P.V.} \kl{1/ \pi s}$.
We will also frequently write $r^{k}$ for the multiplication operator that maps a function $\kl{z, r} \mapsto g \kl{z, r}$ to
the function $\kl{z, r} \mapsto r^{k} g \kl{z, r}$.

An important role in our  analysis plays the function
$G_n \colon \R^{2n} \setminus \{ (x,x) : x \in \R^n\} \to \R$ defined by
\begin{equation*}
G_n \kl{x, y} =
\begin{cases}
\frac{1}{2\pi} \log \abs{x-y}
&  \text{ for } n=2
\\
\frac{\abs{x-y}^{2-n}}{\omega_{n-1} (2-n)}
& \text{ for } n > 2 \,,
\end{cases}
\end{equation*}
which  is the fundamental solution  of the $n$-dimensional
Laplacian $\Delta = \sum_{i=1}^n \frac{\partial^2}{\partial x_i^2}$.
By definition, the  fundamental solution $G_n $  is a solution of   $\Delta_x  G_n \kl{x,y} = \delta_n \kl{x-y}$,
with $\delta_n$ denoting the $n$-dimensional delta distribution.

\subsection{Main results}
\label{sec:means}

As the  main result of  this paper  we derive explicit reconstruction formulas for recovering a smooth function supported in the ellipsoid $\bar E$ from its spherical mean values with centers on the boundary $\partial E$.

In even dimension our main result reads as follows:

\begin{theorem}[Reconstruction in even dimension]  \label{thm:inv-even}
Let $n \geq 2$ be even, let $E$ be the solid ellipsoid defined by \req{ell}, and define the  constant $c_n \coloneqq (-1)^{(n-2)/2}  \omega_{n-2} \pi (n-2)! \, 2^{2-n} $.

Then, for every $f \in C^\infty_0\kl{\bar E}$ and every $x \in E$,
we have
\begin{multline}\label{eq:inv-even}
f\kl{x}
= \frac{\det\skl{A}}{c_n} \,
\Delta_{Ax}
\int_{S^{n-1}}
\int_{0}^\infty
\kl{ r  \D_r^{n-2} r^{n-2}  \Mo f }\kl{A\sigma, r}
\\ \times \log \abs{\sabs{x - A \sigma}^2 - r^2} \rmd r \rmd \sigma \,.
\end{multline}
Here $\partial_r$ denotes differentiation with respect to $r$,
$\D_r = \skl{2r}^{-1} \partial_r$ differentiation with respect to $r^2$, and $\Delta_{Ax}= \sum_{i=1}^{n} \frac{1}{a_i^2}
\partial^2_{x_i}$ the Laplacian with respect to $Ax$.
\end{theorem}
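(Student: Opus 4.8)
The plan is to show that the right-hand side of \req{inv-even}, regarded as a linear operator applied to $f$, acts as the identity on $C^\infty_0\skl{\bar E}$ for $x\in E$. I would reduce this to a kernel identity: rewrite the back-projection as $\int_{\R^n}\mathcal{N}(x,z)\,f(z)\,\rmd z$ for an explicit kernel $\mathcal{N}$, and then prove the distributional identity $\tfrac{\det(A)}{c_n}\,\Delta_{Ax}\mathcal{N}(x,z)=\delta_n(x-z)$ for $x,z\in E$, from which $\req{inv-even}$ follows.

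First I would transfer the radial operator $r\D_r^{n-2}r^{n-2}$ off the data and onto the logarithmic kernel. Since $n$ is even, $\skl{n-2}/2$ is an integer, so in the variable $s=r^2$ one has $\D_r=\partial_s$, $r^{n-2}=s^{(n-2)/2}$, and $r^{n-2}\Mo f$ is a smooth, compactly supported function of $s$ (using that $\Mo f\skl{A\sigma,\edot}$ is even and smooth in $r$). Integrating by parts $n-2$ times in $s$ — the boundary terms vanishing by the support of $f$ and by the smoothness of $\log\abs{R^2-s}$ away from the diagonal, where $R\coloneqq\sabs{x-A\sigma}>0$ for $x\in E$, $A\sigma\in\partial E$ — turns the $\log$ into the finite-part kernel $\partial_s^{n-2}\log\abs{R^2-s}\propto\skl{s-R^2}^{-(n-2)}$. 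Re-expressing $\Mo f$ through \req{means} and passing to polar coordinates $y=r\om$ then collapses the $\skl{r,\om}$-integration into an integral over $\R^n$, yielding, up to the constant $-(n-3)!/\omega_{n-1}$,
\begin{equation*}
\mathcal{N}(x,z)\;\propto\;\int_{S^{n-1}}\mkl{\sabs{z-A\sigma}^2-\sabs{x-A\sigma}^2}^{-(n-2)}\,\rmd\sigma\,,
\end{equation*}
understood in the finite-part sense.

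The heart of the argument is the observation that the integrand depends \emph{affinely} on $\sigma$: $\sabs{z-A\sigma}^2-\sabs{x-A\sigma}^2=\skl{\sabs{z}^2-\sabs{x}^2}-2\sinner{A(z-x)}{\sigma}$. Near the diagonal, writing $x=z+\xi$, this linearizes to $2\sinner{A\sigma-z}{\xi}+O(\sabs{\xi}^2)$, so that $\mathcal{N}\skl{z+\xi,z}$ is, to leading order, a superposition over $\sigma$ of the plane powers $\sinner{A\sigma-z}{\xi}^{-(n-2)}$. Applying $\Delta_{Ax}$ raises the homogeneity from $-(n-2)$ to $-n$, and I would then invoke the classical plane-wave (Gelfand--Shilov) representation of the delta distribution, valid in even dimension, to identify the resulting angular average with $\delta_n$. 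The factor $\det(A)$ arises from the Jacobian relating the delta in the scaled variable $Ax$ to that in $x$, equivalently from $\delta_n\skl{A(x-z)}=\skl{\det A}^{-1}\delta_n(x-z)$, and the constant $c_n$ is fixed by matching to the standard normalization of that plane-wave formula. Equivalently, one may identify $\tfrac{\det A}{c_n}\mathcal{N}\skl{\edot,z}$ with the fundamental solution $G_n$ in the variable $Ax$ modulo a term harmonic in $Ax$, and conclude via $\Delta_x G_n(x,y)=\delta_n(x-y)$.

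I expect the main obstacle to be this last distributional step, where three points require care: (i) the finite-part interpretation of the singular power must be propagated consistently through the integration by parts and the polar change of variables; (ii) the emergence of $\delta_n$ with the \emph{exact} constant, not merely the correct singular support, must be justified — this is where the even-dimensional plane-wave expansion does the real work and where the hypothesis that $n$ is even is essential; and (iii) the validity precisely for $x,z\in E$ hinges on the directions $\set{A\sigma-z:\sigma\in S^{n-1}}$ winding once around the origin, which holds exactly because $z$ lies inside $E$ — for $z\notin E$ these directions fail to cover $S^{n-1}$ and no delta forms, consistent with the data determining $f$ only inside the ellipsoid. A secondary technical point is the vanishing of the boundary terms in the integration by parts, which I would handle using the vanishing of the relevant derivatives of $\Mo f\skl{A\sigma,\edot}$ at the centers $A\sigma\in\partial E$.
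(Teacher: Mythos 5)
Your first half mirrors the paper: integrating by parts in $s=r^2$ to move $\D_r^{n-2}$ off the data and onto the kernel, passing to polar coordinates around $A\sigma$, and arriving at a kernel of the form $\mathcal{N}(x,y)=\int_{S^{n-1}}\Phi^{(n-2)}\kl{\sabs{x-A\sigma}^2-\sabs{y-A\sigma}^2}\rmd\sigma$ whose argument is affine in $\sigma$ is exactly the content of the paper's basic integral identity (Lemma \ref{lem:means}). The divergence, and the gap, is in the step that actually proves the theorem. The paper does not linearize near the diagonal: it exploits the affine dependence on $\sigma$ to apply the Funk--Hecke formula, which converts the sphere integral \emph{exactly} into $\omega_{n-2}\int_{-1}^{1}(1-s^2)^{(n-3)/2}\,\Phi^{(n-2)}\kl{2\sabs{Ax-Ay}(s_\star-s)}\rmd s$ with $s_\star=(\sabs{x}^2-\sabs{y}^2)/(2\sabs{A(x-y)})$, proves $\sabs{s_\star}<1$ for $x,y\in E$ (Lemma \ref{lem:norm}; this, not a covering-of-directions argument, is where membership in $E$ enters), and then evaluates the remaining one-dimensional integral in closed form through the Hilbert transform identity $\partial_{s_\star}^{n-3}\Ho_s\bigl[(1-s^2)^{(n-3)/2}_+\bigr](s_\star)=(-1)^{n/2}(n-3)!$. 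This yields that $\mathcal{N}(x,y)$ equals a constant times $\sabs{Ax-Ay}^{2-n}$, i.e.\ an exact multiple of $G_n(Ax,Ay)$, for \emph{all} $x\neq y$ in $E$; only then is $\Delta_{Ax}$ applied, via Lemma \ref{lem:fs}.

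Your substitute for this step --- writing $x=z+\xi$, linearizing the argument to $2\inner{A\sigma-z}{\xi}+O(\sabs{\xi}^2)$, and invoking the even-dimensional plane-wave representation of $\delta_n$ --- does not close. First, that representation requires the uniform measure on the unit sphere of directions, whereas your directions $A\sigma-z$ sweep $S^{n-1}$ with a nonconstant Jacobian and nonunit lengths; for a ball centered at $z$ the weights conspire correctly, but for an ellipsoid the cancellation of the anisotropy is precisely the nontrivial content of the theorem, and you give no argument for it. Second, the linearization controls only the leading singularity: the $O(\sabs{\xi}^2)$ remainder and the offset $\sabs{z}^2-\sabs{x}^2$ present away from the diagonal could contribute a nonzero residual (smoothing) operator, so at best you obtain $f$ plus an unidentified correction. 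Your fallback --- that $\mathcal{N}(\edot,z)$ agrees with $G_n$ in the variable $Ax$ \emph{modulo a harmonic term} --- is asserted rather than proved, and proving it amounts to the exact kernel evaluation you are trying to bypass. As written, the proposal establishes the correct singular support and homogeneity of the kernel but not the identity \req{inv-even} with the constant $c_n$.
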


\begin{proof}
See Section \ref{sec:even}.
\end{proof}

In odd  dimensions we have the following
corresponding result:

\begin{theorem}[Reconstruction in odd dimension]  \label{thm:inv-odd}
Let $n \geq 3$ be odd, let $E$ be the  solid ellipsoid  defined by \req{ell}, and define the  constant
$c_n \coloneqq (-1)^{(n-1)/2}  \om_{n-2}(n-2)! \,  2^{3-n}
$.

Then, for every $f \in C^\infty_0\kl{\bar E}$ and every  $x \in E$,
we have
\begin{equation}\label{eq:inv-odd}
f\kl{x}
	=
	\frac{\det\skl{A}}{c_n}
	\Delta_{Ax} \,
	\int_{S^{n-1}}
	\kl{ r  \D_r^{n-3} r^{n-2} \Mo f }
	\kl{A\sigma, \sabs{x-A\sigma}}
	\rmd \sigma \,.
\end{equation}
Here $\partial_r$, $\D_r$ and $\Delta_{Ax}$ are as in
Theorem \ref{thm:inv-even}.
\end{theorem}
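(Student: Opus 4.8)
The plan is to reduce \req{inv-odd} to a single back-projection identity and to invert that identity by means of the fundamental solution of the Laplacian. The starting point is the observation that $\Delta_{Ax}$ is the ordinary Laplacian in the stretched variable $u=Ax$: for $F(x)=\Phi(Ax)$ one has $\Delta_{Ax}F=(\Delta\Phi)(Ax)$. Hence the defining property $\Delta_u G_n(u,v)=\delta_n(u-v)$ gives $\Delta_{Ax}G_n(Ax,Ay)=\delta_n(A(x-y))=(\det A)^{-1}\delta_n(x-y)$, so that
\[
\frac{\det A}{c_n}\,\Delta_{Ax}\!\left(c_n\int_{\R^n}G_n(Ax,Ay)\,f(y)\,\rmd y\right)=\det A\int_{\R^n}\Delta_{Ax}G_n(Ax,Ay)\,f(y)\,\rmd y=f(x).
\]
Therefore it suffices to prove the back-projection identity
\begin{equation}\label{eq:bp-odd}
\int_{S^{n-1}}\kl{r\D_r^{n-3}r^{n-2}\Mo f}\kl{A\sigma,\sabs{x-A\sigma}}\,\rmd\sigma=c_n\int_{\R^n}G_n(Ax,Ay)\,f(y)\,\rmd y,
\end{equation}
after which \req{inv-odd} follows by applying $\frac{\det A}{c_n}\Delta_{Ax}$ and interchanging it with the $y$-integration, as is justified for Newtonian potentials. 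It is in fact enough to prove \req{bp-odd} up to a term annihilated by $\Delta_{Ax}$.

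To establish \req{bp-odd} I would compute the kernel of its left-hand side directly, generalizing the integral identities of \cite{FinHalRak07,FinPatRak04} from the ball to the ellipsoid. Inserting \req{means} and writing the surface measure of a sphere through a one-dimensional delta, $\int_{\sabs{y-A\sigma}=\rho}f\,\rmd S=\int_{\R^n}f(y)\,\delta(\sabs{y-A\sigma}-\rho)\,\rmd y$, expresses $\kl{r^{n-2}\Mo f}(A\sigma,r)$ as an integral of $f$ against the single-layer distribution on the sphere of radius $r$ about $A\sigma$. The operator $r\D_r^{n-3}$ then produces a radial derivative of order $n-3$ of this layer; after evaluation at $r=\sabs{x-A\sigma}$ and interchange of the $\sigma$- and $y$-integrations, the left-hand side of \req{bp-odd} becomes $\int_{\R^n}\kappa(x,y)f(y)\,\rmd y$ with
\[
\kappa(x,y)=\int_{S^{n-1}}\Big[\text{(order-$(n-3)$ radial derivatives of)}\ \delta\big(\sabs{y-A\sigma}-\rho\big)\Big]_{\rho=\sabs{x-A\sigma}}\,\rmd\sigma.
\]
The zero set $\set{\sigma\in S^{n-1}:\sabs{x-A\sigma}=\sabs{y-A\sigma}}$ is the section of $S^{n-1}$ by the hyperplane with normal $A(y-x)$, and this is precisely how the anisotropic distance $\sabs{A(x-y)}$ — and hence the kernel $G_n(Ax,Ay)$ — enters. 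Evaluating this surface integral by the coarea formula should give $\kappa(x,y)=c_n G_n(Ax,Ay)$. Since the matrix $A$ is carried inertly through all Euclidean operations on the integration spheres $\set{A\sigma+r\om}$, the computation reduces to that of \cite{FinHalRak07,FinPatRak04} when $A=I$.

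The main obstacle is this last, singular, computation of $\kappa$, carried out uniformly in the odd dimension $n$ and with the exact constant $c_n=(-1)^{(n-1)/2}\om_{n-2}(n-2)!\,2^{3-n}$. Here the present approach must replace the spherical-harmonic reduction of \cite{FinHalRak07,FinPatRak04} by a direct argument: one has to control the distributional $r$-derivatives $\D_r^{n-3}$ across the wavefront $r=\sabs{x-A\sigma}$, where the integration sphere meets the singularity of the kernel, justify every interchange of differentiation and integration (the boundary terms vanishing because $f\in C_0^\infty(\bar E)$ and $x\in E$), and verify that the contributions assemble into exactly the homogeneous kernel $\sabs{A(x-y)}^{2-n}$ rather than into lower-order terms. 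It is Huygens' principle in odd dimensions that collapses the full $r$-integration appearing in the even-dimensional formula \req{inv-even} to the single wavefront evaluation in \req{inv-odd}; propagating the constant correctly through the $n-3$ differentiations is the most delicate part of the bookkeeping.
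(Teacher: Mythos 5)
Your overall architecture agrees with the paper's: reduce the theorem to the statement that the back-projection integral equals $c_n\int_{\R^n}G_n(Ax,Ay)f(y)\,\rmd y$, then invert by applying $\Delta_{Ax}$ and using that $G_n(Ax,Ay)$ is, up to the factor $\det(A)$, a fundamental solution in the variable $Ax$ (this is exactly Lemma \ref{lem:fs}). The difficulty is that the identity you reduce to --- your kernel computation $\kappa(x,y)=c_nG_n(Ax,Ay)$ --- is the entire content of the proof, and you do not carry it out; you explicitly defer it as ``the main obstacle.'' As written, the argument stops exactly where the work begins, so there is a genuine gap rather than a complete proof.

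The paper closes that gap by a mechanism your sketch does not contain. Instead of manipulating the single layer $\delta(\sabs{y-A\sigma}-\rho)$ and its order-$(n-3)$ radial derivatives, it writes the back-projection with the Heaviside kernel $\Phi_{\rm odd}(\sabs{x-A\sigma}^2-r^2)$ in the \emph{squared} variables and integrates by parts in $r$ with $\D_r=(2r)^{-1}\partial_r$, so that all $n-2$ derivatives land on the kernel (Lemma \ref{lem:means}). The point of squaring is the identity $\sabs{x-A\sigma}^2-\sabs{y-A\sigma}^2=\abs{x}^2-\abs{y}^2-2\sinner{\sigma}{Ax-Ay}$, which is \emph{linear} in $\sigma$; the Funk--Hecke formula then collapses the $S^{n-1}$-integral to a one-dimensional integral of $(1-s^2)^{(n-3)/2}$ against $\Phi_{\rm odd}^{(n-2)}$, whose first derivative is a point mass at $s_\star=(\abs{x}^2-\abs{y}^2)/(2\sabs{A(x-y)})$. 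Lemma \ref{lem:norm} (the only place where $x,y\in E$ is used) guarantees $\abs{s_\star}<1$, where for odd $n$ the function $(1-s_\star^2)^{(n-3)/2}$ is a polynomial of degree $n-3$, so its $(n-3)$-fold derivative is the constant $(-1)^{(n-3)/2}(n-3)!$; this simultaneously produces the exact homogeneity $\sabs{Ax-Ay}^{2-n}$ and the constant $c_n$ (Lemma \ref{lem:odd}). Your proposed direct coarea-formula evaluation of a surface integral of differentiated delta layers gives no reason why the result should be exactly homogeneous of degree $2-n$ with no lower-order terms, nor a route to the constant --- precisely the points you flag as unverified. The Huygens-type collapse to a single wavefront evaluation, which you treat as the conceptual heart, is in fact the easy final step, obtained from $\chi'=\delta$ and $\delta(\rho^2-r^2)=(2r)^{-1}\delta(\rho-r)$ after the kernel identity is already in hand.
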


\begin{proof}
See Section \ref{sec:means-odd}.
\end{proof}

If all semi-principal axis $a_1, \dots, a_n$ coincide, then the ellipsoid $E$ is obviously an $n$-dimensional
ball. In such a case, the reconstruction formulas of
Theorem \ref{thm:inv-even} and \ref{thm:inv-odd} have  been first established in \cite{FinHalRak07} for even dimensions and in \cite{FinPatRak04} for odd dimensions. For the special cases of elliptical domains in two and three dimension, the formulas of Theorem \ref{thm:inv-even} and \ref{thm:inv-odd} have been recently established in~\cite{Sal14}. Different reconstruction formulas of the  back-projection type for spherical means on ellipsoids have recently been obtained for two and three spatial dimensions in \cite{AnsFilMadSey13,Hal13a,Nat12} and for arbitrary dimension in \cite{Hal14,Pal12}.

\subsection{Outline}

The remainder of this paper is mainly devoted to the proofs of
Theorems~\ref{thm:inv-even} and  \ref{thm:inv-odd}. In the following
Section \ref{sec:aux} we derive some required auxiliary lemmas.
The proof of Theorem~\ref{thm:inv-even} will be given in Section
\ref{sec:even} and the proof of Theorem \ref{thm:inv-odd} will be given
in Section \ref{sec:means-odd}. The paper concludes with a short discussion in Section
\ref{sec:discussion}.

\section{Auxiliary results}
\label{sec:aux}

Throughout the following, $A$ denotes the  diagonal matrix with positive diagonal entries  $a_1, \dots, a_n >0$, and $E$ denotes the  corresponding solid ellipsoid defined by~\req{ell}.

\begin{definition}[Reconstruction integral]
For $\Phi \in L^1_{\rm loc} \kl{\R}$ and
$f  \in  C^\infty_0 \kl{\bar E}$ we define
\begin{multline}  \label{eq:bp}
\kl{\Bo_{\Phi} \Mo f}  \kl{x} \coloneqq
\int_{S^{n-1}}
\int_{0}^\infty
r  \D_r^{n-2} r^{n-2} \kl{\Mo f }\kl{A\sigma, r}
\\ \times \Phi \kl{\sabs{x - A \sigma}^2 - r^2} \rmd r \rmd \sigma \,.
\end{multline}
\end{definition}

In the even dimensional case the reconstruction integral will
be applied with $\Phi\kl{s} = \log \abs{s}$ whereas in odd dimensions we use \req{bp} with $\Phi\kl{s} =
\chi \set{s > 0}$. In both cases we will show that
$\Delta_{Ax} \Bo_{\Phi} \Mo f $
is a constant multiple of $f$, which yields the reconstruction formulas
of Theorems \ref{thm:inv-even} and \ref{thm:inv-odd} (see Sections \ref{sec:even}
and \ref{sec:means-odd}).

\begin{lemma}[Basic integral identity]\label{lem:means}
Let $\Phi \in L^1_{\rm loc} \kl{\R}$ be locally integrable
and let $\Phi^{(n-2)}$ denote its $n-2$ fold distributional derivative.
Then, for every $f\in C^\infty_0 \kl{\bar E}$ and every $x \in \R^n$,
\begin{multline} \label{eq:int}
\kl{\Bo_{\Phi} \Mo f}  \kl{x}
= \frac{\omega_{n-2}}{\omega_{n-1}}\int_{\R^n}
f \kl{y}
\int_{-1}^{1} \kl{1-s^2}^{(n-3)/2}
\\ \times \Phi^{(n-2)} \kl{ 2 \abs{Ax-Ay}
\kl{\frac{\abs{x}^2-\abs{y}^2}{2\abs{A\kl{x-y}}} -   s}} \rmd s
\rmd y
 \,.
\end{multline}
\end{lemma}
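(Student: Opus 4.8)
The plan is to compute the reconstruction integral $(\Bo_\Phi \Mo f)(x)$ by expanding the definition of the spherical mean and carrying out a sequence of reductions that strip away the derivatives in $r$ and convert the $\sigma$-integral over $S^{n-1}$ into a volume integral against $f$. Let me sketch the key steps.

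First I would substitute the definition \req{means} of $\Mo f$ into \req{bp}, so that $(\Bo_\Phi \Mo f)(x)$ becomes an iterated integral over $\sigma \in S^{n-1}$, $r \in (0,\infty)$, and $\omega \in S^{n-1}$, with integrand $r\,\D_r^{n-2} r^{n-2}$ acting on $f(A\sigma + r\omega)$ times $\Phi(\sabs{x-A\sigma}^2 - r^2)$. The key analytic device will be integration by parts in the $r$ variable against $\Phi$. Because $\D_r = (2r)^{-1}\partial_r$ is differentiation with respect to $r^2$, and the argument of $\Phi$ is $\sabs{x-A\sigma}^2 - r^2$, each application of $\D_r$ applied to $\Phi(\sabs{x-A\sigma}^2 - r^2)$ produces $-\Phi'$ evaluated at the same point. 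The plan is therefore to move the operator $r\,\D_r^{n-2} r^{n-2}$ off the spherical mean and onto $\Phi$ via $n-2$ integrations by parts, exploiting the compact support of $f$ (hence of $\Mo f$ in $r$) to kill the boundary terms at $r=0$ and $r=\infty$, so that the derivatives land on $\Phi$ and produce the distributional derivative $\Phi^{(n-2)}$ appearing on the right-hand side of \req{int}.

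Next I would change the order of integration to bring the $y$-integral (after the substitution $y = A\sigma + r\omega$, or a related change of variables) to the outside, turning the combined $(\sigma, r, \omega)$ integration into an integration over $y \in \R^n$ together with one remaining angular parameter. The goal is to recognize the remaining one-dimensional integral as the $\int_{-1}^1 (1-s^2)^{(n-3)/2}\,ds$ factor, which is exactly the Funk--Hecke type weight that arises when integrating a radial profile against $S^{n-2}$; the parameter $s$ should emerge as the cosine of the angle between $A\omega$ and the direction of $A(x-y)$, or equivalently as a normalized projection. The critical algebraic identity to verify is that the argument of $\Phi^{(n-2)}$ collapses to $2\abs{Ax - Ay}\big(\tfrac{\abs{x}^2 - \abs{y}^2}{2\abs{A(x-y)}} - s\big)$. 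I expect this to follow from expanding $\sabs{x-A\sigma}^2 - r^2$ using $A\sigma = $ the boundary point and substituting $r\omega = y - A\sigma$, then completing the square and factoring out $\abs{A(x-y)}$; the term $\abs{x}^2 - \abs{y}^2$ should appear because $\sabs{x-A\sigma}^2 - \sabs{y-A\sigma}^2$ telescopes to an affine function of $A\sigma$.

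The main obstacle, I expect, will be this last bookkeeping step: correctly tracking how the Jacobian of the change of variables, the factors of $r^{n-2}$ and the surface measures $\rmd\omega$, $\rmd\sigma$ combine with the derivatives of $\Phi$ so that the constant $\omega_{n-2}/\omega_{n-1}$ emerges cleanly and the weight $(1-s^2)^{(n-3)/2}$ comes out with the correct exponent. In particular, handling the case $n=2$ separately (where the exponent $(n-3)/2 = -1/2$ makes the integrand singular but still integrable, and where $\Phi^{(n-2)} = \Phi$) and confirming that the distributional derivatives of $\Phi$ are justified for merely locally integrable $\Phi$ will require care. The integration-by-parts boundary terms and the interchange of the distributional derivative with the integral are the places where rigor must be maintained, but the structural computation should go through as outlined.
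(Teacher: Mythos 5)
Your outline matches the paper's proof essentially step for step: integration by parts in $r$ against $\Phi$ (using $\D_r \Phi(\sabs{x-A\sigma}^2-r^2) = -\Phi'(\sabs{x-A\sigma}^2-r^2)$ and the compact support of $\Mo f$ in $r$ to kill boundary terms), the change of variables $y = A\sigma + r\omega$ to convert to a volume integral against $f$, the telescoping identity $\sabs{x-A\sigma}^2 - \sabs{y-A\sigma}^2 = \sabs{x}^2 - \sabs{y}^2 - 2\sinner{\sigma}{Ax-Ay}$, and the Funk--Hecke formula applied to the remaining $\sigma$-integral to produce the weight $(1-s^2)^{(n-3)/2}$ and the factor $\omega_{n-2}$. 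The only minor slip is that $s$ arises as the cosine of the angle between $\sigma$ and $Ax-Ay$ (not between $A\omega$ and $A(x-y)$); otherwise the bookkeeping you flag goes through exactly as you anticipate.
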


\begin{proof}
Since $\D_r = \skl{2r}^{-1} \partial_r$, we have
$r  \D_r^{n-2} =  (-1)^{n-2} \skl{\D_r^{n-2}}^* r$ with $\skl{\D_r^{n-2}}^*$ denoting the formal $L^2$-adjoint of $\D_r^{n-2}$.
Integration by parts and one application of Fubini's theorem therefore yields
\begin{align*}
(\Bo_{\Phi} & \Mo f)\skl{x}
=
\int_{S^{n-1}}
\int_{0}^\infty
\kl{r  \D_r^{n-2} r^{n-2} \Mo f }\kl{A\sigma, r}
\Phi \kl{\sabs{x - A \sigma}^2- r^2} \rmd r \rmd \sigma
\\&=
 (-1)^{n-2}
 \int_{S^{n-1}}
\int_{0}^\infty
\kl{\D_r^{n-2}}^* \kl{ r^{n-1} \Mo f }\kl{A\sigma, r}
\Phi \kl{\sabs{x - A \sigma}^2- r^2} \rmd r \rmd \sigma
\\
&=
 (-1)^{n-2}
 \int_{S^{n-1}}
\int_{0}^\infty
r^{n-1} \kl{\Mo f }\kl{A\sigma, r}
\D_r^{n-2}
\Phi \kl{\sabs{x - A \sigma}^2- r^2} \rmd r \rmd \sigma
\\
&
=\int_{0}^\infty
r^{n-1} \kl{\Mo f }\kl{A\sigma, r}
\int_{S^{n-1}}
\Phi^{(n-2)} \kl{\sabs{x - A \sigma}^2- r^2} \rmd \sigma
\rmd r \,.
\end{align*}
The last identity, the definition of $\skl{\Mo f} \kl{A\sigma, r}$, and the use of spherical coordinates  $\kl{r, \omega} \mapsto
A \sigma + r \omega$ around the center $A \sigma$ show
\begin{equation}\label{eq:int1}
\kl{\Bo_{\Phi} \Mo f}  \kl{x}= \frac{1}{\omega_{n-1}}
\int_{\R^n} f \kl{y}
\int_{S^{n-1}}
\Phi^{(n-2)} \kl{\sabs{x - A \sigma}^2 -
\sabs{y - A \sigma}^2} \rmd \sigma
\rmd y
\end{equation}

Next we write the argument of $\Phi^{(n-2)}$ as
\begin{align*}
\sabs{x-A\sigma}^2 - \sabs{y-A\sigma}^2
&=
\abs{x}^2 - \abs{y}^2 -  2\sinner{A\sigma}{x-y}
\\
&=
\abs{x}^2 - \abs{y}^2 -  2\sinner{\sigma}{Ax-Ay} \,.
\end{align*}
By the Funk-Hecke formula
$\int_{S^{n-1}}  h \kl{ \inner{\sigma}{v}} \rmd \sigma
=
\omega_{n-2}
\int_{-1}^{1} \kl{1-s^2}^{(n-3)/2} h \kl{ \abs{v} s}
\rmd s
$  (see, for example, \cite{Mue66,Nat01}), the inner integral in \req{int1} can be rewritten as
\begin{multline*}
\int_{S^{n-1}}
\Phi^{(n-2)} \kl{\sabs{x - A\sigma}^2
-\sabs{y-A\sigma}^2} \rmd \sigma
\\
\begin{aligned}
&=
\int_{S^{n-1}}
\Phi^{(n-2)} \kl{\abs{x}^2 - \abs{y}^2 -  2\sinner{\sigma}{Ax-Ay} } \rmd \sigma
\\
&=
\omega_{n-2}
\int_{-1}^{1} \kl{1-s^2}^{(n-3)/2}
\Phi^{(n-2)} \kl{ \abs{x}^2 - \abs{y}^2 - 2 \abs{Ax-Ay}s}
\rmd s
\\
&=
\omega_{n-2}
\int_{-1}^{1} \kl{1-s^2}^{(n-3)/2}
\Phi^{(n-2)} \kl{ 2 \abs{Ax-Ay}
\kl{\frac{\abs{x}^2-\abs{y}^2}{2\abs{A\kl{x-y}}}-   s}} \rmd s \,.
\end{aligned}
\end{multline*}
Inserting this  for the inner integral in \req{int1}
yields \req{int}.
\end{proof}

In the following sections we will show that
for special choices of $\Phi$ the kernel in \req{int1}
is a constant multiple of the fundamental solution
of the Laplace equation. For that purpose we will require that
$\skl{\abs{x}^2-\abs{y}^2}/\skl{2\sabs{A\skl{x-y}}} < 1$ for all $x \neq y \in E$.

\begin{lemma}[Simple norm estimate] \label{lem:norm}
For $x , y  \in E$ with $x  \neq y$ we have
\begin{equation*}
	\abs{\frac{\abs{x}^2-\abs{y}^2}{2 \abs{A\kl{x-y}}} } < 1   \,.
\end{equation*}
\end{lemma}

\begin{proof}
By definition we have the inequalities $\sabs{A^{-1}x} < 1$ and $\sabs{A^{-1} y} <1$
for any two points $x$ and $y$ in  the ellipsoid $E$.
From the Cauchy-Schwarz inequality and the triangle inequality we therefore obtain
\begin{equation*}
\abs{\frac{\abs{x}^2-\abs{y}^2}{2\abs{A\kl{x-y}}}}
=
\frac{\abs{\inner{A^{-1} \kl{x+y}}{A \kl{x-y}}}
}{2 \abs{A \kl{x-y}}}
\leq
\frac{\sabs{A^{-1}x} + \sabs{A^{-1}y}}{2} < 1 \,,
\end{equation*}
as we intended to show.
\end{proof}

Recall that  $G_n $ denotes the  fundamental solution  of the Laplacian and  therefore satisfies
$\Delta_x  G_n \kl{x,y} = \delta_n \kl{x-y}$. A simple change of variables implies the following result, that we will also
require in the following sections.

\begin{lemma}[Fundamental solution of $\Delta_{Ax}$] \label{lem:fs}
For every $f  \in  C^\infty \skl{\R^n}$ with compact support and every
$x \in \R^n$, we have the identity
\begin{equation*}
f \kl{x} =  \det\skl{A} \,  \Delta_{Ax}  \int_{\R^n} f \kl{y} G_n \skl{Ax, Ay} \rmd y\,.
\end{equation*}
\end{lemma}

\begin{proof}
Using that $G_n$ is the fundamental solution of
Laplacian and making the  coordinate change  $u = Ay$
shows
\begin{multline*}
\Delta_{Ax}  \int_{\R^n} f \kl{y} G_n \skl{Ax, Ay}
\rmd y
=
\int_{\R^n} f \kl{y} \delta_n \skl{Ax - Ay}
\rmd y
\\
=
\frac{1}{\det\skl{A}}\int_{\R^n} f \kl{A^{-1} u}
 \delta_n \skl{Ax - u}
\rmd u
=
\frac{f \kl{x}}{ \det\skl{A}}
\,.
\end{multline*}
Hence we have verified the required  identity.
\end{proof}

Finally, we will also make use of  the following well
known fact, that the spherical means satisfy
the Darboux  equation:

\begin{lemma}[Darboux equation]\label{lem:darboux}
The spherical means of any $f \in C^\infty \kl{\R^n}$
satisfy the Darboux equation
\begin{equation} \label{eq:darboux}
\Delta_x \Mo f \kl{x, r}
=
\kl{ \frac{\partial^2}{\partial r^2}
+ \frac{n-1}{r} \frac{\partial}{\partial r}  }
\Mo f  \kl{x, r}
\quad \text{ for } \kl{x, r} \in \R^n \times \kl{0, \infty} \,.
\end{equation}
The  right hand side in  \req{darboux} may also be
written as  $\skl{ r^{1-n} \partial_r r^{n-1} \partial_r
\Mo f}  \kl{x, r}$.
\end{lemma}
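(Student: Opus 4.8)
The plan is to verify that both sides of \req{darboux} coincide with the spherical mean $\Mo\kl{\Delta f}$ of the Laplacian of $f$. Because the Laplacian is translation invariant and differentiation under the integral sign is justified for smooth $f$, differentiating the definition \req{means} directly in $x$ gives
\[
\Delta_x \Mo f \kl{x,r} = \frac{1}{\om_{n-1}} \int_{S^{n-1}} \kl{\Delta f}\kl{x + r \om} \rmd \om = \Mo\kl{\Delta f}\kl{x,r}.
\]
It therefore remains to identify the radial operator acting on $\Mo f$, taken in its divergence form $\skl{r^{1-n}\partial_r r^{n-1}\partial_r \Mo f}\kl{x,r}$, with this same quantity.

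First I would compute the first radial derivative. Differentiating under the integral and recognizing the directional derivative $\om \cdot \nabla f\kl{x+r\om}$ as the outward normal derivative on the sphere $\sabs{y-x}=r$ centred at $x$, together with the scaling $\rmd\ell(y) = r^{n-1}\rmd\om$ of the surface measure, yields
\[
r^{n-1} \partial_r \Mo f \kl{x,r} = \frac{1}{\om_{n-1}} \int_{\sabs{y-x}=r} \frac{\partial f}{\partial \nu}\kl{y}\, \rmd \ell(y),
\]
where $\nu$ denotes the outward unit normal. Applying the divergence theorem over the ball $\sabs{y-x}<r$ converts this boundary flux into the volume integral $\frac{1}{\om_{n-1}}\int_{\sabs{y-x}<r}\kl{\Delta f}\kl{y}\rmd y$.

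Then I would differentiate once more in $r$. Since the radius $r$ enters the volume integral only through its domain of integration, $\partial_r$ returns the flux of $\Delta f$ through the bounding sphere,
\[
\partial_r \kl{r^{n-1}\partial_r \Mo f}\kl{x,r} = \frac{1}{\om_{n-1}} \int_{\sabs{y-x}=r} \kl{\Delta f}\kl{y}\, \rmd\ell(y) = \frac{r^{n-1}}{\om_{n-1}} \int_{S^{n-1}} \kl{\Delta f}\kl{x+r\om}\rmd\om.
\]
Multiplying by $r^{1-n}$ recovers $\Mo\kl{\Delta f}\kl{x,r}$, which matches the expression for $\Delta_x \Mo f$ obtained above and hence establishes \req{darboux} in divergence form; the equivalence with the second-order form $\skl{\partial_r^2 + \tfrac{n-1}{r}\partial_r}\Mo f$ then follows by expanding the product rule. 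The only points requiring care — and the main technical obstacle — are justifying the interchange of $\Delta_x$ and $\partial_r$ with the surface integral and the application of the divergence theorem, both of which are legitimate for $f \in C^\infty\kl{\R^n}$ since the integrand and all its derivatives are continuous.
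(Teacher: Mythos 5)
Your proof is correct and complete. The paper itself gives no argument for this lemma---it simply cites Courant--Hilbert---and your derivation (showing both sides equal $\Mo\kl{\Delta f}$ via differentiation under the integral sign and the divergence theorem applied to the ball $\sabs{y-x}<r$) is precisely the classical proof found in that reference, so there is nothing to reconcile.
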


\begin{proof}
See, for example, \cite{CouHil62}.
\end{proof}

\section{Inversion in even dimension}
\label{sec:even}

Throughout this section, let $n\geq2$ be a given even number.
We will consider the function $\Phi_{\rm even} \in L^1_{\rm loc} \kl{\R}$ defined by
\begin{equation}\label{eq:kernel-even}
 \Phi_{\rm even} \kl{s} \coloneqq  \log \abs{s}
 \quad \text{ for } s \neq 0 \,.
\end{equation}
Further we denote by  $ \Phi_{\rm even}^{(n-2)}$ its $n-2$ fold
distributional derivative.
According to Lemma~\ref{lem:means} we have,  for the
corresponding reconstruction integral defined by \req{bp},
\begin{multline*}
\kl{\Bo_{\Phi_{\rm even}} \Mo f}  \kl{x}
= \frac{\omega_{n-2}}{\omega_{n-1}}\int_{\R^n}
f \kl{y}
\int_{-1}^{1} \kl{1-s^2}^{(n-3)/2}
\\ \times \Phi_{\rm even}^{(n-2)} \kl{ 2 \abs{Ax-Ay}
\kl{\frac{\abs{x}^2-\abs{y}^2}{2\abs{A\kl{x-y}}} -   s}} \rmd s \,
\rmd y
 \,.
\end{multline*}
The reconstruction formula in even dimension essentially  follows from this identity after identifying the inner integral as a constant multiple of the fundamental solution of the Laplacian
in $Ax$.

\begin{lemma}[A Hilbert transform identity]
Denote $\kl{1-s^2}^{(n-3)/2}_+ \coloneqq
\max \set{0, 1-s^2}^{(n-3)/2}$ for $s \in \R$ and recall that
$\Ho_s[\edot] $ denotes the Hilbert transform.
Then, for even $n \geq 4$, we have
\begin{equation} \label{eq:hilbert-even}
\partial_{s_\star}^{n-3}
\Ho_s \ekl{ \kl{1-s^2}^{(n-3)/2}_+  }(s_\star)
= (-1)^{n/2} (n-3)!  \quad \text{ for } \abs{s_\star} < 1 \,.
\end{equation}
\end{lemma}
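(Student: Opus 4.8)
The plan is to compute the Hilbert transform of $(1-s^2)^{(n-3)/2}_+$ in closed form and then differentiate. The key observation is that for even $n$, the exponent $(n-3)/2$ is a half-integer, so $(1-s^2)^{(n-3)/2}_+$ is an algebraic function supported on $[-1,1]$; its Hilbert transform should produce, for $|s_\star|<1$, a polynomial in $s_\star$ of degree $n-3$ whose leading coefficient carries all the information we need after applying $\partial_{s_\star}^{n-3}$.

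First I would recall the standard Hilbert-transform identity for Chebyshev-type weights. A clean route is to use the known pair for Chebyshev polynomials of the second kind: writing $s=\cos\theta$ and $U_k$ for the Chebyshev polynomials of the second kind, one has the classical formula
\begin{equation*}
\Ho_s\bigl[(1-s^2)^{1/2}\, U_k(s)\,\chi\set{|s|<1}\bigr](s_\star)
= - T_{k+1}(s_\star) \quad\text{for } |s_\star|<1,
\end{equation*}
where $T_{k+1}$ is the Chebyshev polynomial of the first kind (up to a normalization convention for $\Ho_s$ that I would fix at the outset). Since $n$ is even, I can write $(1-s^2)^{(n-3)/2}_+ = (1-s^2)^{1/2}\,(1-s^2)^{(n-4)/2}$ with $(n-4)/2$ a nonnegative integer, expand $(1-s^2)^{(n-4)/2}$ in the basis $\set{U_k}$, and apply the identity termwise. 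This expresses $\Ho_s[(1-s^2)^{(n-3)/2}_+](s_\star)$ as a linear combination of $T_{k+1}(s_\star)$ with $k\le n-4$, hence as a polynomial in $s_\star$ of degree at most $n-3$.

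Next I would extract the top-degree behavior. Applying $\partial_{s_\star}^{n-3}$ annihilates every term of degree below $n-3$ and returns $(n-3)!$ times the coefficient of $s_\star^{\,n-3}$. Only the highest term $U_{n-4}$ in the expansion contributes a $T_{n-3}$ of full degree $n-3$, so the computation reduces to tracking a single leading coefficient: the coefficient of $U_{n-4}$ in $(1-s^2)^{(n-4)/2}$ and the leading coefficient of $T_{n-3}$, which is $2^{n-4}$. Carrying the constants through, I expect the product of these to collapse to $(-1)^{n/2}$, yielding $\partial_{s_\star}^{n-3}\Ho_s[(1-s^2)^{(n-3)/2}_+](s_\star) = (-1)^{n/2}(n-3)!$ as claimed.

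The main obstacle will be bookkeeping of signs and the Hilbert-transform normalization. The convolution kernel $\mathrm{P.V.}(1/\pi s)$ together with the $U_k\mapsto -T_{k+1}$ correspondence introduces several sign and factor-of-$\pi$ conventions that must be pinned down consistently; a half-integer miscount in the expansion of $(1-s^2)^{(n-4)/2}$ or an off-by-one in the Chebyshev index is the likeliest source of error. To guard against this I would verify the final constant directly in the base case $n=4$, where $(1-s^2)^{1/2}_+$ has Hilbert transform equal (up to the fixed normalization) to $-s_\star$ on $(-1,1)$, so $\partial_{s_\star}^{1}$ of it equals $-1 = (-1)^{4/2}(4-3)!$, confirming the formula before pushing the induction or the closed-form argument to general even $n$.
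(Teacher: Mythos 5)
Your outline follows the same underlying idea as the paper's proof: factor $(1-s^2)^{(n-3)/2}_+$ as $\sqrt{1-s^2}$ times a polynomial of degree $n-4$, use the known Hilbert transform of the semicircle weight to conclude that $\Ho_s\bigl[(1-s^2)^{(n-3)/2}_+\bigr]$ is a polynomial of degree $n-3$ on $(-1,1)$, and then observe that $\partial_{s_\star}^{n-3}$ extracts $(n-3)!$ times the leading coefficient, which equals the leading coefficient $(-1)^{(n-4)/2}=(-1)^{n/2}$ of $(1-s^2)^{(n-4)/2}$. Where you differ is the bookkeeping: the paper peels off one factor of $s$ at a time via the identity $\Ho_s[s\,g](s_\star)=s_\star\,\Ho_s[g](s_\star)-\pi^{-1}\int_\R g$, whereas you expand $(1-s^2)^{(n-4)/2}$ in the Chebyshev basis $\{U_k\}$ and transform termwise. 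Your route is a legitimate and arguably cleaner alternative, since only the top term $U_{n-4}\mapsto T_{n-3}$ survives the $(n-3)$-fold differentiation and the factors $2^{n-4}$ from the leading coefficients of $U_{n-4}$ and $T_{n-3}$ cancel exactly.

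There is, however, a concrete sign problem that you must fix before the argument closes. With the paper's convention ($\Ho_s$ is convolution with $\mathrm{P.V.}(1/\pi s)$, so $\Ho_s[g](s_\star)=\pi^{-1}\,\mathrm{P.V.}\int g(s)/(s_\star-s)\,\rmd s$), the finite Hilbert transform of $(1-s^2)^{1/2}_+$ on $(-1,1)$ is $+s_\star$, and the Chebyshev pair reads $\Ho_s\bigl[(1-s^2)^{1/2}_+U_k\bigr](s_\star)=+T_{k+1}(s_\star)$, not $-T_{k+1}(s_\star)$. With the minus sign you adopted, the whole computation delivers $-(-1)^{n/2}(n-3)!$, contradicting the lemma. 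Moreover, the base-case check you rely on to catch exactly this kind of error is itself wrong: you assert $-1=(-1)^{4/2}(4-3)!$, but $(-1)^{4/2}(4-3)!=(+1)\cdot 1=+1$, so the check as written does not confirm your convention --- it refutes it. Once the sign of the Chebyshev identity is corrected to match the paper's kernel, the rest of your leading-coefficient computation goes through and reproduces \req{hilbert-even}.
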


\begin{proof}
We first note the identity
$\Ho_s \ekl{ s  g }\kl{s_\star} = s_\star  \Ho_s \ekl{g} \kl{s_\star} - \frac{1}{\pi} \int_{\R} g \kl{s} \rmd s$
for $g \colon \R \to \R$; see  \cite[Table~7.3]{Pou10}.  Applying this identity repeatedly
and using the fact that the function $\kl{1-s^2}^{(n-3)/2}$ is equal to $ \sqrt{1-s^2}$ times a  polynomial of degree $n-4$ yields
\begin{equation*}
\Ho_s \ekl{ \kl{1-s^2}^{(n-3)/2}_+ }\skl{s_\star} =
Q_{n-4} \kl{s_\star}
\Ho_s \ekl{ \skl{1 - s^2}^{1/2}_+} \skl{s_\star}
+
Q_{n-5} \kl{s_\star}  \,,
\end{equation*}
for a certain polynomials  $Q_{n-4}$ and $Q_{n-5}$ of degree $n-4$ and $n-5$, respectively. (In the case $n=4$ we take $Q_{n-5} =0$.)
The Hilbert transform of $(1 - s^2)^{1/2}_+$ is  known and equal to $s_\star$ on $\set{\abs{s_\star} < 1}$; see \cite[Table 13.11]{Bra00b}. This implies the identity
\begin{equation*}
\Ho_s \ekl{ \kl{1-s^2}^{(n-3)/2}_+} \kl{s_\star}
=
 c s_\star^{n-3}
+ P_{n-4} (s_\star)   \quad \text{ for } \abs{s_\star} < 1\,,
\end{equation*}
where $c$ is the leading coefficient of $Q_{n-4}$ and $P_{n-4}$ is
a polynomial of degree $\leq n-4$.
The leading coefficient of  $Q_{n-4}$ equals the leading coefficient of $\skl{1-s^2}^{(n-4)/2}$ and is given by $c = (-1)^{(n-4)/2} = (-1)^{n/2}$. Consequently, $\partial_{s_\star}^{n-3}
\Ho_s \bigl[ (1-s^2)^{(n-3)/2}_+ \bigr]$ is constant for $\abs{s_\star} < 1$ and its value is $(-1)^{n/2} (n-3)!$. This
shows \req{hilbert-even}.
\end{proof}

The following Lemma is the key to the reconstruction formula \req{inv-even}.

\begin{lemma}[Kernel in even dimension] \label{lem:even}
For every $x ,y\in E$ with $x \neq y  $ and every $s_\star \in \R$ with
$\abs{s_\star} < 1 $, we have
\begin{multline*}
	\frac{\om_{n-2}}{\omega_{n-1}}
	\int_{-1}^{1} \kl{1-s^2}^{(n-3)/2}
	\Phi_{\rm even}^{(n-2)}
    \kl{2 \abs{Ax-Ay} \skl{s_\star - s}} \rmd s
	\\ =
	(-1)^{(n-2)/2} \,  \frac{ \pi \om_{n-2}  (n-2)!}
	{2^{n-2}} \,  G_n \kl{Ax, Ay} \,.
\end{multline*}
\end{lemma}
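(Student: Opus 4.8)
The plan is to evaluate the inner integral
\[
I(s_\star) \coloneqq \int_{-1}^{1}(1-s^2)^{(n-3)/2}\,\Phi_{\rm even}^{(n-2)}\kl{2\abs{Ax-Ay}(s_\star-s)}\,\rmd s,
\]
where $s_\star = (\abs{x}^2-\abs{y}^2)/(2\abs{A(x-y)})$ satisfies $\abs{s_\star}<1$ by Lemma~\ref{lem:norm}, and to identify it with the fundamental solution $G_n(Ax,Ay)$. The key idea is that differentiating $\Phi_{\rm even}(s)=\log\abs{s}$ once gives the principal value $\mathrm{P.V.}(1/s)$, which is precisely the convolution kernel of the Hilbert transform (up to the factor $\pi$). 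So $\Phi_{\rm even}^{(n-2)}=\partial_s^{n-3}\mkl{\mathrm{P.V.}(1/s)}$, and integrating against a function is, after the $s\mapsto$ convolution-variable bookkeeping, an application of $n-3$ derivatives to a Hilbert transform. This is exactly the object computed in the preceding ``Hilbert transform identity'' lemma, equation~\req{hilbert-even}.

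First I would pull the factor $2\abs{Ax-Ay}$ out of the argument by the chain rule: each of the $n-2$ derivatives acting on $\Phi_{\rm even}$ with respect to its argument produces a factor $(2\abs{Ax-Ay})^{-1}$ when I rewrite the derivative as acting in the $s$ (or $s_\star$) variable instead, contributing an overall power $(2\abs{Ax-Ay})^{-(n-2)}=(2\abs{Ax-Ay})^{2-n}$. That power is precisely what must appear to match $G_n(Ax,Ay)\propto\abs{Ax-Ay}^{2-n}$ for $n>2$, so tracking it carefully is essential. Next I would recognize that one of the $n-2$ derivatives, together with the $\mathrm{P.V.}$ structure, turns the integral $\int_{-1}^1(1-s^2)^{(n-3)/2}\,\mathrm{P.V.}(1/(s_\star-s))\,\rmd s$ into $\pi\,\Ho_s\ekl{(1-s^2)^{(n-3)/2}_+}(s_\star)$, using that $\Ho_s[g](s_\star)=\frac1\pi\int g(s)/(s_\star-s)\,\rmd s$ and that the integrand vanishes outside $[-1,1]$. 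The remaining $n-3$ derivatives fall on $s_\star$, so I can invoke \req{hilbert-even} to conclude that
\[
\partial_{s_\star}^{n-3}\Ho_s\ekl{(1-s^2)^{(n-3)/2}_+}(s_\star)=(-1)^{n/2}(n-3)!
\]
is constant on $\abs{s_\star}<1$.

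Assembling the pieces, $I(s_\star)$ equals $(2\abs{Ax-Ay})^{2-n}$ times $\pi$ times $(-1)^{n/2}(n-3)!$, and multiplying by the prefactor $\om_{n-2}/\om_{n-1}$ must reproduce the stated right-hand side; matching constants (using $\om_{n-1}$, the explicit form of $G_n$ for $n>2$, and the sign $(-1)^{n/2}=(-1)^{(n-2)/2}$ up to the sign absorbed in $G_n$'s factor $(2-n)^{-1}$) will close the argument. I would treat the base case $n=2$ separately, since there $\Phi_{\rm even}^{(n-2)}=\log\abs{\cdot}$ itself (no derivative), the Funk–Hecke weight degenerates, and $G_2=\frac{1}{2\pi}\log\abs{\cdot}$; a direct check that $\frac{\om_0}{\om_1}\log\mkl{2\abs{Ax-Ay}\abs{s_\star-s}}$ integrates correctly over $[-1,1]$ should suffice. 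The main obstacle I anticipate is bookkeeping rather than conceptual: correctly apportioning the $n-2$ derivatives between the $s$-convolution (one derivative, producing the Hilbert transform) and the $s_\star$-parameter ($n-3$ derivatives, matching \req{hilbert-even}), while keeping every power of $2\abs{Ax-Ay}$ and every sign consistent so that the final constant agrees exactly with $(-1)^{(n-2)/2}\pi\om_{n-2}(n-2)!/2^{n-2}$. A secondary subtlety is justifying the interchange of distributional differentiation with the finite integral, which is legitimate because $(1-s^2)^{(n-3)/2}_+$ is compactly supported and the $\mathrm{P.V.}$ kernel pairs well against it.
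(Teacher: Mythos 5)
Your proposal matches the paper's own proof essentially step for step: for $n\ge 4$ you convert $\Phi_{\rm even}^{(n-2)}$ into one $\mathrm{P.V.}$ kernel (yielding $\pi\,\Ho_s$) plus $n-3$ derivatives in $s_\star$, extract the factor $(2\abs{Ax-Ay})^{2-n}$, and invoke \req{hilbert-even}, exactly as the paper does. The only piece left implicit is the $n=2$ base case, where the ``direct check'' amounts to the classical identity $\frac{1}{\pi}\int_{-1}^{1}(1-s^2)^{-1/2}\log\abs{s_\star-s}\,\rmd s=-\log 2$, which the paper quotes from the literature.
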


\begin{proof}
We first consider the case $n=2$, where the integral to be computed is given by
\begin{multline*}
	\frac{1}{\pi} \int_{-1}^{1} \kl{1-s^2}^{-1/2} \log \abs{ 2 \abs{Ax-Ay} \kl{s_\star -   s}} \rmd s
	 \\ =
	\log \kl{ 2 \sabs{Ax - Ay}}  +
	\frac{1}{\pi} \int_{-1}^{1}
	\frac{\log \abs{s_\star -   s}}{\sqrt{1-s^2} } \rmd s \,.
\end{multline*}
Substituting $s = \cos \alpha$, the latter integral equals
$\frac{1}{\pi} \int_{0}^{\pi} \log \abs{s_\star -   \cos \kl{\alpha}}  \rmd \alpha$. This integral has been computed in \cite{FinHalRak07} and the result is $- \log 2$. Hence the above sum is $\log \kl{ 2 \sabs{Ax - Ay}}  - \log 2
= \log \sabs{Ax - Ay}$. Since $G_2\kl{x,y} =
1/\skl{2\pi} \log \sabs{x-y}$ this shows the claim for the
case $n=2$.
	
Now suppose  $n\geq 4$.  After recalling that
$\Phi_{\rm even} \kl{s} = \log \abs{s}$, that the distributional derivative of $\Phi_{\rm even}$ is
$\mathrm{P.V.} \kl{1/s}$, and that the Hilbert transform is defined as the convolution with $\mathrm{P.V.} \kl{1/\pi s}$ we obtain
\begin{multline*}
\frac{\om_{n-2}}{\omega_{n-1}} \int_{-1}^{1} \kl{1-s^2}^{(n-3)/2}
\Phi_{\rm even}^{(n-2)} \kl{ 2 \abs{Ax-Ay}
\kl{s_\star -   s}} \rmd s
\\
\begin{aligned}
&= \frac{\om_{n-2}}{\om_{n-1} 2^{n-2}  \abs{Ax-Ay}^{n-2}}
\partial_{s_\star}^{n-3}
\int_{-1}^{1} \kl{1-s^2}^{(n-3)/2}
\frac{1}{s_\star - s} \rmd s
\\&=
\frac{\om_{n-2} \pi }{\om_{n-1} 2^{n-2} \abs{Ax-Ay}^{n-2}}
\partial_{s_\star}^{n-3}
\Ho_s \left[ \kl{1-s^2}^{(n-3)/2}_+\right](s_\star)
\\
&= (-1)^{n/2} \frac{\om_{n-2} \pi (n-3)!}
{\om_{n-1} 2^{n-2} \abs{Ax-Ay}^{n-2}}
\\
&= (-1)^{(n-2)/2} \, \frac{\om_{n-2} \pi (n-2)!}
{2^{n-2}} \, G_n \kl{Ax, Ay} \,.
	\end{aligned}
\end{multline*}
For the second last equality  we used Equation \req{hilbert-even} and the last identity follows from the identity  $G_n \kl{Ax,Ay} = - \abs{Ax-Ay}^{2-n}/\skl{\omega_{n-1} (n-2) }$.
The last displayed equation shows the desired identity for the case $n \geq 4$.
\end{proof}

\subsection{Proof of  Theorem \ref{thm:inv-even}}

Lemmas \ref{lem:means}, \ref{lem:norm} and \ref{lem:even}
show
\begin{align*}
\kl{\Bo_{\Phi_{\rm even}} \Mo f}  \kl{x}
& =
\int_{S^{n-1}}
\int_{0}^\infty
r  \D_r^{n-2} r^{n-2} \kl{\Mo f }\kl{A\sigma, r}
\Phi_{\rm even} \kl{\sabs{x - A \sigma}^2 - r^2} \rmd r \rmd \sigma
\\
&=
\frac{\omega_{n-2}}{\omega_{n-1}}
\int_{\R^n}
f \kl{y}
\int_{-1}^{1} \kl{1-s^2}^{(n-3)/2}
\\
&\hspace{6em} \times \Phi_{\rm even}^{(n-2)} \kl{ 2 \abs{Ax-Ay}
\kl{\frac{\abs{x}^2-\abs{y}^2}{2 \abs{A\kl{x-y}}}-   s}} \rmd s
\\
&=
(-1)^{(n-2)/2}
\frac{ \pi \omega_{n-2}  (n-2)!}
{2^{n-2}}
\int_{\R^n}
f \kl{y}
G_n \kl{A x - A y} \rmd y
 \,.
\end{align*}

According to Lemma \ref{lem:fs}, application of
the Laplacian in the variable $Ax$ to the last integral gives $f \skl{x}/\det\skl{A}$. Consequently,
\begin{multline*}
(-1)^{(n-2)/2}
\frac{\pi \omega_{n-2}  (n-2)!}{2^{n-2} \det\skl{A} }
 f (x)
=
\Delta_{Ax} \kl{\Bo_{\Phi_{\rm even}} \Mo f}  \kl{x}
\\
= \Delta_{Ax} \int_{S^{n-1}}
\int_{0}^\infty
r  \D_r^{n-2} r^{n-2} \kl{\Mo f }\kl{A\sigma, r} \log \abs{\sabs{x - A \sigma}^2- r^2} \rmd r \rmd \sigma
\,,
 \end{multline*}
which is the desired reconstruction formula for the even dimensional case, stated in Theorem \ref{thm:inv-even}.

\section{Inversion in odd dimension}
\label{sec:means-odd}

Now let $n\geq 3$ be an odd number.
The proof of the reconstruction formula of Theorem \ref{thm:inv-odd}
will be similar to proof of the corresponding formula in the even dimensional case.
However, the derivation  of the reconstruction formula in odd dimensions is based on the use of the Heaviside function
\begin{equation*}
\Phi_{\rm odd} \colon \R \to \R \colon
s \mapsto \chi \set{s > 0}
\end{equation*}
in place  of the logarithmic function $\log \abs{s}$  used in even dimensions.

The following key Lemma is the counterpart of  Lemma~\ref{lem:even} from the  even dimensional case.  It is however much easier to establish.

\begin{lemma}[Kernel in odd  dimension]\label{lem:odd}
For every $x , y \in E$ with $x \neq y$ and every $s_\star \in \R$ with
$\abs{s_\star} < 1 $, we have
\begin{multline*}
	\frac{\om_{n-2}}{\om_{n-1}}
	\int_{-1}^{1} \kl{1-s^2}^{(n-3)/2}
	\Phi_{\rm odd}^{(n-2)} \kl{ 2 \abs{Ax-Ay}
	\kl{s_\star -   s}} \rmd s
	\\=
	(-1)^{(n-1)/2}
\frac{\om_{n-2}(n-2)!}{2^{n-2}} \, G_n\kl{Ax,Ay}
\,.
\end{multline*}
\end{lemma}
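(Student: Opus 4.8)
\emph{Plan of proof.} The plan is to exploit that $\Phi_{\rm odd}$ is the Heaviside function, whose distributional derivatives are concentrated at a single point; this collapses the whole integral to one derivative of a polynomial and avoids the Hilbert transform machinery needed in the even dimensional case.

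First I would observe that the first distributional derivative of $\Phi_{\rm odd}$ is the one dimensional Dirac delta $\delta$, so that $\Phi_{\rm odd}^{(n-2)} = \delta^{(n-3)}$. Since $n$ is odd, $k \coloneqq (n-3)/2$ is a nonnegative integer and $n-3 = 2k$ is even. Writing $c \coloneqq 2\abs{Ax - Ay} > 0$ (positive because $x \neq y$ and $A$ is invertible), I would then apply the homogeneity relation $\delta^{(n-3)}\skl{c\,u} = c^{-(n-2)} \delta^{(n-3)}\skl{u}$, valid for $c > 0$, to extract the factor $c$ from the argument of the delta. The substitution $u = s_\star - s$, with $\rmd s = -\rmd u$, then converts the left hand integral into
\begin{equation*}
\frac{\om_{n-2}}{\om_{n-1}} \, c^{-(n-2)} \int_{s_\star - 1}^{s_\star + 1} \kl{1 - \skl{s_\star - u}^2}^{(n-3)/2} \, \delta^{(n-3)}\skl{u} \, \rmd u \,,
\end{equation*}
where the hypothesis $\abs{s_\star} < 1$ is used precisely to guarantee that the endpoints $s_\star \pm 1$ straddle $u = 0$, so that the origin lies in the interior of the interval and no boundary contributions arise.

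The key step is to evaluate $\delta^{(n-3)}$ against the smooth function $\phi\skl{u} \coloneqq \skl{1 - \skl{s_\star - u}^2}^{k}$. Because $k$ is an integer, $\phi$ is a polynomial in $u$ of degree exactly $2k = n-3$, so $\sinner{\delta^{(n-3)}}{\phi} = (-1)^{n-3} \phi^{(n-3)}(0)$ is simply $(n-3)!$ times the leading coefficient of $\phi$, the sign $(-1)^{n-3}$ being equal to $1$. Reading off the leading term from $\skl{1 - \skl{s_\star-u}^2}^k = \skl{-u^2 + \cdots}^k$ gives leading coefficient $(-1)^k$, whence the integral equals $(-1)^{(n-3)/2} (n-3)!$.

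Finally I would reassemble the constants. Multiplying by $\om_{n-2}/\om_{n-1}$ and by $c^{-(n-2)} = 2^{2-n} \abs{Ax-Ay}^{2-n}$, and inserting the explicit fundamental solution $G_n\skl{Ax, Ay} = -\abs{Ax-Ay}^{2-n}/\skl{\om_{n-1}(n-2)}$ together with $(n-2)! = (n-2)(n-3)!$, the two sides are seen to coincide. The only point requiring care is the sign and factorial bookkeeping, where one checks that $(-1)^{(n-3)/2} = -(-1)^{(n-1)/2}$, the extra minus sign being exactly the one carried by $G_n$. I do not expect a genuine obstacle here, consistently with the remark preceding the statement that this lemma is much easier to establish than its even dimensional counterpart.
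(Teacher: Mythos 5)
Your proof is correct and takes essentially the same route as the paper's: both exploit $\Phi_{\rm odd}'=\delta$ together with the homogeneity in $2\abs{Ax-Ay}$ to reduce the integral to the $(n-3)$-fold derivative of the degree-$(n-3)$ polynomial $(1-{}\cdot{}^2)^{(n-3)/2}$, i.e.\ $(n-3)!$ times its leading coefficient $(-1)^{(n-3)/2}$, and then match constants via $G_n(Ax,Ay)=-\abs{Ax-Ay}^{2-n}/(\omega_{n-1}(n-2))$. The only cosmetic difference is that you pair $\delta^{(n-3)}$ directly with the (substituted) test function, whereas the paper first pulls the derivatives outside as $\partial_{s_\star}^{n-3}$ and then differentiates $(1-s_\star^2)^{(n-3)/2}$; the signs and factorials agree either way.
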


\begin{proof}
The first distributional derivative of $\Phi_{\rm odd}$ is given by the delta distribution, $\Phi_{\rm odd}' \kl{s}= \delta\kl{s}$. Consequently,
\begin{multline*} \label{eq:odd-aux}
	\frac{\om_{n-2}}{\om_{n-1}}
	\int_{-1}^{1} \kl{1-s^2}^{(n-3)/2}
	\Phi_{\rm odd}^{(n-2)} \kl{ 2 \abs{Ax-Ay}
	\kl{s_\star -   s}} \rmd s
\\
\begin{aligned}
&= 		\frac{\om_{n-2}}{\om_{n-1}2^{n-2} \abs{Ax-Ay}^{n-2}}
\partial_{s_\star}^{n-3}
\int_{-1}^{1} \kl{1-s^2}^{(n-3)/2}
\delta\skl{s_\star - s} \rmd s
\\&=
\frac{\om_{n-2}}{\om_{n-1}2^{n-2} \abs{Ax-Ay}^{n-2}}
\partial_{s_\star}^{n-3}
\kl{1-s_\star^2}^{(n-3)/2}
\\&=
(-1)^{(n-3)/2}
\frac{\om_{n-2}(n-3)! }{\om_{n-1}2^{n-2} \abs{Ax-Ay}^{n-2}}
\\&=
(-1)^{(n-1)/2}
\frac{\om_{n-2}(n-2)!}{2^{n-2}} \, G_n\kl{Ax,Ay}
\,.
\end{aligned}
\end{multline*}
For the second last identity we made use of the fact that since  $n$ is odd, $\skl{1-s_\star^2}^{(n-3)/2}$ is a polynomial of degree  $n-3$ for $\sabs{s_\star} < 1$ and for the last
identity we used the representation $G_n \kl{x,y}
= - \abs{x-y}^{2-n}/\skl{\omega_{n-1} (n-2) }$ of the fundamental solution of the Laplacian in $n$ spatial dimensions.
\end{proof}

\subsection{Proof of Theorem \ref{thm:inv-odd}}

Lemmas \ref{lem:means}, \ref{lem:norm} and \ref{lem:odd}
show
\begin{align*}
\kl{\Bo_{\Phi_{\rm odd}} \Mo f}  \kl{x}
&=
\frac{\omega_{n-2}}{\omega_{n-1}}
\int_{\R^n}
f \kl{y}
\int_{-1}^{1} \kl{1-s^2}^{(n-3)/2}
\\
& \hspace{6em} \times \Phi_{\rm odd}^{(n-2)} \kl{ 2 \abs{Ax-Ay}
\kl{\frac{\abs{x}^2-\abs{y}^2}{2 \abs{A\kl{x-y}}}-   s}} \rmd s
\\
&=
(-1)^{(n-1)/2} \frac{\om_{n-2}(n-2)!}{2^{n-2}}
\int_{\R^n} f \kl{y} G_n \kl{A x - A y} \rmd y
 \,.
\end{align*}
Application of Lemma \ref{lem:fs} and recalling that
$\D_r^* r = - r  \D_r $ yield
\begin{multline*}
(-1)^{(n-1)/2} \, \frac{\om_{n-2}(n-2)!}{\det \kl{A} 2^{n-2}}
f (x)
\\
\begin{aligned}
&=
 \Delta_{Ax} \kl{\Bo_{\Phi_{\rm odd}} \Mo f}  \kl{x}
\\
& =
\Delta_{Ax} \int_{S^{n-1}}
\int_{0}^\infty
r  \D_r^{n-2} r^{n-2} \kl{\Mo f }\kl{A\sigma, r} \chi \set{\sabs{x - A \sigma}^2- r^2} \rmd r \rmd \sigma
\\&=
\Delta_{Ax} \int_{S^{n-1}}
\int_{0}^\infty
r  \D_r^{n-3} r^{n-2} \kl{\Mo f }\kl{A\sigma, r}
\delta \kl{\sabs{x - A \sigma}^2- r^2} \rmd r \rmd \sigma
\\&=
\frac{1}{2} \,
\Delta_{Ax} \int_{S^{n-1}}
\int_{0}^\infty
\D_r^{n-3} r^{n-2} \kl{\Mo f }\kl{A\sigma, r}
\delta \kl{\abs{x - A \sigma}- r} \rmd r \rmd \sigma
\\&=
\frac{1}{2} \,
\Delta_{Ax} \int_{S^{n-1}}
\kl{\D_r^{n-3} r^{n-2} \Mo f }\kl{A\sigma,
\abs{x - A \sigma}} \rmd \sigma
\,.
\end{aligned}
\end{multline*}
This shows \req{inv-odd} and concludes the proof of Theorem \ref{thm:inv-odd}.

\section{Discussion}
\label{sec:discussion}

The problem of reconstructing a function from spherical means is important for
many imaging and remote sensing applications
(see, for example, \cite{BelFel09,BurBauGruHalPal07,FinRak09,KucKun08,Nor80,NorLin81,XuWan06}).
Especially in the context of the novel hybrid imaging methods photoacoustic and thermoacoustic tomography
many solution methods have been developed. Known reconstruction techniques can be classified in iterative reconstruction methods
 (see \cite{DeaBueNtzRaz12,DonGoeKun13,PalNusHalBur07b,XuWanAmbKuc04,ZhaAnaPanWan05}),
model based time reversal (see \cite{BurMatHalPal07,FinPatRak04,HriKucNgu08,QiaSteUhlZha11}),  Fourier domain algorithms
(see \cite{AgrKuc07,HalSchZan09b,KoeFraNiePalWebFre01,Kun07b,Kun12,NorLin81,XuXuWan02}), and algorithms based
on explicit reconstruction formulas  of the back-projection 
type (see \cite{And88,Faw85,FinPatRak04,FinHalRak07,Hal11b,Kun07a,Kun11,Nat12,Pal12,XuWan05}).
The  approach implements explicit solutions of the reconstruction problem. It is therefore much faster than iterative solution techniques, where the spherical mean transform and some adjoint transform have to be applied repeatedly.

Explicit back-projection type formulas for recovering a function from spherical mean values on a surface  $S$  are only known for
certain type of surfaces. Such formulas have been derived for a planar surface in \cite{And88,BukKar78,Faw85}  and much later
for spherical surfaces in \cite{FinPatRak04,FinHalRak07,Kun07a,XuWan05}. In two and three spatial dimensions
a formula for certain polygons and polyhedra have been derived in \cite{Kun11} and for ellipses and  ellipsoids in
\cite{AnsFilMadSey13,Hal13a,Nat12}.
Back-projection type formulas for ellipsoids in arbitrary
dimension have been found in \cite{Hal14,Pal12}.
In this paper we derived a new formula for recovering a function from spherical means with centers  on an ellipsoid in arbitrary dimension (see Theorem~\ref{thm:inv-even}) which is different  from the ones in \cite{Hal14,Pal12}. Our reconstruction formula  generalizes one of the formulas of  \cite{FinHalRak07,FinPatRak04}
from the spherical to the elliptical case and can be numerically implemented in an efficient way following, for example,
the implementations presented in \cite{AmbPat07,FinHalRak07} for spherical center sets. Finally, note that we did not touch important theoretical  aspects
such as injectivity results, range conditions, or stability estimates.
Such results have been derived, for example, in  \cite{AgrFinKuc09,AgrKucQui07,AgrQui96,AmbKuc05,FinRak06,
 FriQui14,Pal10}.

\begin{small}
\providecommand{\noopsort}[1]{}

\end{small}


\begin{thebibliography}{10}
\setlength{\parskip}{0.em}
\bibitem{AgrFinKuc09}
M.~Agranovsky, D.~Finch, and P.~Kuchment.
\newblock Range conditions for a spherical mean transform.
\newblock {\em Inverse Probl. Imaging}, 3(3):373--383, 2009.

\bibitem{AgrKuc07}
M.~Agranovsky and P.~Kuchment.
\newblock Uniqueness of reconstruction and an inversion procedure for
  thermoacoustic and photoacoustic tomography with variable sound speed.
\newblock {\em Inverse Probl.}, 23(5):2089--2102, 2007.

\bibitem{AgrKucQui07}
M.~L. Agranovsky, K.~Kuchment, and E.~T. Quinto.
\newblock Range descriptions for the spherical mean {R}adon transform.
\newblock {\em J. Funct. Anal.}, 248(2):344--386, 2007.

\bibitem{AgrQui96}
M.~L. Agranovsky and E.~T. Quinto.
\newblock Injectivity sets for the {R}adon transform over circles and complete
  systems of radial functions.
\newblock {\em J. Funct. Anal.}, 139(2):383--414, 1996.

\bibitem{AmbKuc05}
G.~Ambartsoumian and P.~Kuchment.
\newblock On the injectivity of the circular {R}adon transform.
\newblock {\em Inverse Probl.}, 21(2):473--485, 2005.


\bibitem{AmbPat07}
G.~Ambartsoumian and S.~K. Patch.
\newblock Thermoacoustic tomography: numerical results.
\newblock {\em Proceedings of SPIE}, 6437:6437--47, 2007.

\bibitem{And88}
L.-E. Andersson.
\newblock On the determination of a function from spherical averages.
\newblock {\em SIAM J. Math. Anal.}, 19(1):214--232, 1988.

\bibitem{AnsFilMadSey13}
M.~Ansorg, F.~Filbir, W.~R. Madych, and R.~Seyfried.
\newblock Summability kernels for circular and spherical mean data.
\newblock {\em Inverse Problems}, 29(1):015002, 2013.

\bibitem{BelFel09}
A.~Beltukov and D.~Feldman.
\newblock Identities among {E}uclidean {S}onar and {R}adon transforms.
\newblock {\em Adv. in Appl. Math.}, 42(1):23--41, 2009.

\bibitem{Bra00b}
R.~N. Bracewell.
\newblock {\em The {F}ourier Transform and Applications}.
\newblock McGraw Hill, 2000.


\bibitem{BukKar78}
A.~L. Bukhgeim and V.~B. Kardakov.
\newblock Solution of the inverse problem for the equation of elastic waves by
  the method of spherical means.
\newblock {\em  Sib. Math. J.}, 19(4):528--535, 1978.

\bibitem{BurBauGruHalPal07}
P.~Burgholzer, J.~Bauer-Marschallinger, H.~Gr{\"u}n, M.~Haltmeier, and
  G.~Paltauf.
\newblock Temporal back-projection algorithms for photoacoustic tomography with
  integrating line detectors.
\newblock {\em Inverse Probl.}, 23(6):S65--S80, 2007.

\bibitem{BurMatHalPal07}
P.~Burgholzer, G.~J. Matt, M.~Haltmeier, and G.~Paltauf.
\newblock Exact and approximate imaging methods for photoacoustic tomography
  using an arbitrary detection surface.
\newblock {\em Phys. Rev. E}, 75(4):046706, 2007.

\bibitem{CouHil62}
R.~Courant and D.~Hilbert.
\newblock {\em Methods of Mathematical Physics}, volume~2.
\newblock Wiley-Interscience, New York, 1962.

\bibitem{DeaBueNtzRaz12}
X.~L. Dean-Ben, A.~Buehler, V.~Ntziachristos, and D.~Razansky.
\newblock Accurate model-based reconstruction algorithm for three-dimensional
  optoacoustic tomography.
\newblock {\em IEEE Trans. Med. Imag.}, 31(10):1922--1928, 2012.

\bibitem{DonGoeKun13}
Y.~Dong, T.~G{\"o}rner, and S.~Kunis.
\newblock An algorithm for total variation regularized photoacoustic imaging.
\newblock Preprint, 2013.

\bibitem{Faw85}
J.~A. Fawcett.
\newblock Inversion of {$n$}-dimensional spherical averages.
\newblock {\em SIAM J. Appl. Math.}, 45(2):336--341, 1985.

\bibitem{FinHalRak07}
D.~Finch, M.~Haltmeier, and Rakesh.
\newblock Inversion of spherical means and the wave equation in even
  dimensions.
\newblock {\em SIAM J. Appl. Math.}, 68(2):392--412, 2007.

\bibitem{FinPatRak04}
D.~Finch, S.~K. Patch, and Rakesh.
\newblock Determining a function from its mean values over a family of spheres.
\newblock {\em SIAM J. Math. Anal.}, 35(5):1213--1240, 2004.

\bibitem{FinRak06}
D.~Finch and Rakesh.
\newblock The range of the spherical mean value operator for functions
  supported in a ball.
\newblock {\em Inverse Probl.}, 22(3):923--938, 2006.

\bibitem{FinRak09}
D.~Finch and Rakesh.
\newblock Recovering a function from its spherical mean values in two and three
  dimensions.
\newblock In L.~V. Wang, editor, {\em Photoacoustic imaging and spectroscopy},
  chapter~7, pages 77--88. CRC Press, 2009.

\bibitem{FriQui14}
J. Frikel and E.~T.  Quinto
\newblock Artifacts in incomplete data tomography - with applications to photoacoustic tomography and sonar.
\newblock{arXiv:1407.3453 [math.AP]}, 2014.



\bibitem{Hal11b}
M.~Haltmeier.
\newblock A mollification approach for inverting the spherical mean {R}adon
  transform.
\newblock {\em SIAM J. Appl. Math.}, 71(5):1637--1652, 2011.

\bibitem{Hal13a}
M.~Haltmeier.
\newblock Inversion of circular means and the wave equation on convex planar
  domains.
\newblock {\em Comput. Math. Appl.}, 65(7):1025--1036, 2013.

\bibitem{Hal14}
M.~Haltmeier.
\newblock Universal inversion formulas for recovering a function from spherical
  means.
\newblock {\em SIAM J. Math. Anal.}, 46(1):214--232, 2014.

\bibitem{HalSchZan09b}
M.~Haltmeier, O.~Scherzer, and G.~Zangerl.
\newblock A reconstruction algorithm for photoacoustic imaging based on the
  nonuniform {FFT}.
\newblock {\em IEEE Trans. Med. Imag.}, 28(11):1727--1735, November 2009.

\bibitem{HriKucNgu08}
Y.~Hristova, P.~Kuchment, and L.~Nguyen.
\newblock Reconstruction and time reversal in thermoacoustic tomography in
  acoustically homogeneous and inhomogeneous media.
\newblock {\em Inverse Probl.}, 24(5):055006 (25pp), 2008.

\bibitem{KoeFraNiePalWebFre01}
K.~P. Kostli, D.~Frauchiger, J.~J. Niederhauser, G.~Paltauf, H.~P. Weber, and
  M.~Frenz.
\newblock Optoacoustic imaging using a three-dimensional reconstruction
  algorithm.
\newblock {\em IEEE Sel. Top. Quant. Electr.}, 7(6):918--923, 2001.

\bibitem{KucKun08}
P.~Kuchment and L.~A. Kunyansky.
\newblock Mathematics of thermoacoustic and photoacoustic tomography.
\newblock {\em European J. Appl. Math.}, 19:191--224, 2008.

\bibitem{Kun07a}
L.~A. Kunyansky.
\newblock Explicit inversion formulae for the spherical mean {R}adon transform.
\newblock {\em Inverse Probl.}, 23(1):373--383, 2007.

\bibitem{Kun07b}
L.~A. Kunyansky.
\newblock A series solution and a fast algorithm for the inversion of the
  spherical mean {R}adon transform.
\newblock {\em Inverse Probl.}, 23(6):S11--S20, 2007.

\bibitem{Kun11}
L.~A. Kunyansky.
\newblock Reconstruction of a function from its spherical (circular) means with
  the centers lying on the surface of certain polygons and polyhedra.
\newblock {\em Inverse Probl.}, 27(2):025012, 2011.

\bibitem{Kun12}
L.~A. Kunyansky.
\newblock Fast reconstruction algorithms for the thermoacoustic tomography in
  certain domains with cylindrical or spherical symmetries.
\newblock {\em Inverse Probl. Imaging}, 6(1):111--131, 2012.

\bibitem{Mue66}
C.~M{\"u}ller.
\newblock {\em Spherical Harmonics}.
\newblock Lecture Notes in Mathematics. Springer Verlag, Berlin-New York, 1966.

\bibitem{Nat01}
F.~Natterer.
\newblock {\em The Mathematics of Computerized Tomography}, volume~32 of {\em
  Classics in Applied Mathematics}.
\newblock SIAM, Philadelphia, 2001.

\bibitem{Nat12}
F.~Natterer.
\newblock Photo-acoustic inversion in convex domains.
\newblock {\em Inverse Probl. Imaging}, 6(2):315--320, 2012.

\bibitem{Nor80}
S.~J. Norton.
\newblock Reconstruction of a two-dimensional reflecting medium over a circular
  domain: Exact solution.
\newblock {\em J. Acoust. Soc. Amer.}, 67(4):1266--1273, 1980.

\bibitem{NorLin81}
S.~J. Norton and M.~Linzer.
\newblock Ultrasonic reflectivity imaging in three dimensions: Exact inverse
  scattering solutions for plane, cylindrical and spherical apertures.
\newblock {\em IEEE Trans. Biomed. Eng.}, 28(2):202--220, 1981.

\bibitem{Pal10}
V.~P. Palamodov.
\newblock Remarks on the general {F}unk--{R}adon transform and thermo\-acoustic
  tomography.
\newblock {\em Inverse Probl. Imaging}, 4(4):693--702, 2010.

\bibitem{Pal12}
V.~P. Palamodov.
\newblock A uniform reconstruction formula in integral geometry.
\newblock {\em Inverse Probl.}, 28(6):065014, 2012.

\bibitem{PalNusHalBur07b}
G.~Paltauf, R.~Nuster, M.~Haltmeier, and P.~Burgholzer.
\newblock Experimental evaluation of reconstruction algorithms for limited view
  photoacoustic tomography with line detectors.
\newblock {\em Inverse Probl.}, 23(6):S81--S94, 2007.

\bibitem{Pou10}
A.~I. Poularikas, editor.
\newblock {\em The Transforms and Applications Handbook, Third edition}.
\newblock CRC Press, 2010.

\bibitem{QiaSteUhlZha11}
J.~Qian, P.~Stefanov, G.~Uhlmann, and H.~Zhao.
\newblock An efficient neumann series-based algorithm for thermoacoustic and
  photoacoustic tomography with variable sound speed.
\newblock {\em SIAM J. Imaging Sci.}, 4(3):850--883, 2011.

\bibitem{QuiRieSch11}
E.~T. Quinto, A.~Rieder, and T.~Schuster.
\newblock Local inversion of the sonar transform regularized by the approximate
  inverse.
\newblock {\em Inverse Problems}, 27(3):035006, 18, 2011.

\bibitem{Sal14}
Y.~Salman.
\newblock An inversion formula for the spherical mean transform with data on an
  ellipsoid in two and three dimensions.
\newblock {\em J. Math. Anal. Appl.},
  2014 (in press).

\bibitem{XuWan05}
M.~Xu and L.~V. Wang.
\newblock Universal back-projection algorithm for photoacoustic computed
  tomography.
\newblock {\em Phys. Rev. E}, 71(1):0167061--0167067, 2005.

\bibitem{XuWan06}
M.~Xu and L.~V. Wang.
\newblock Photoacoustic imaging in biomedicine.
\newblock {\em Rev. Sci. Instruments}, 77(4):041101 (22pp), 2006.

\bibitem{XuWanAmbKuc04}
Y.~Xu, L.~V. Wang, G.~Ambartsoumian, and P.~Kuchment.
\newblock Reconstructions in limited-view thermoacoustic tomography.
\newblock {\em Med. Phys.}, 31(4):724--733, 2004.

\bibitem{XuXuWan02}
Y.~Xu, M.~Xu, and L.~V. Wang.
\newblock Exact frequency-domain reconstruction for thermoacoustic
  tomography--{II}: Cylindrical geometry.
\newblock {\em IEEE Trans. Med. Imag.}, 21:829--833, 2002.

\bibitem{ZhaAnaPanWan05}
J.~Zhang, M.~A. Anastasio, X.~Pan, and L.~V. Wang.
\newblock Weighted expectation maximization reconstruction algorithms for
  thermoacoustic tomography.
\newblock {\em IEEE Trans. Med. Imag.}, 24(6):817--820, 2005.

\end{thebibliography}
\end{document}